\documentclass[12pt]{article}

\usepackage{pgf,tikz}
\usepackage{mathrsfs}
\usetikzlibrary{arrows}

\usepackage{graphicx} 
\graphicspath{{figures/}} 

\usepackage{fullpage, url,amsmath,amsfonts,amssymb,mathtools,mathrsfs,graphicx,  algorithm, float, sansmath,epstopdf,color,caption,enumitem,tabularx}
\usepackage[final]{pdfpages}
\usepackage{amsthm}
\usetikzlibrary{automata,topaths}
\usetikzlibrary{decorations.pathreplacing,shapes.misc}
\usepackage{fancyhdr}

\def\beq{\begin{equation}}
\def\eeq{\end{equation}}
\def\baq{\begin{eqnarray}}
\def\eaq{\end{eqnarray}}
\def\baqn{\begin{eqnarray*}}
\def\eaqn{\end{eqnarray*}}

\newcommand{\ball}{\mathbb{B}}

\usepackage{multirow}
\usetikzlibrary{calc,arrows}
\theoremstyle{plain}
\newtheorem{definition}{Definition}
\newtheorem{remark}{Remark}

\newtheorem{example}{Example}
\newtheorem{theorem}{Theorem}
\newtheorem{lemma}[theorem]{Lemma}

\usepackage{blindtext}

\usepackage[colorlinks,linkcolor=blue,citecolor=red]{hyperref}

\usepackage{xcolor, framed}

\newcommand{\R}{{\mathbb R}}

\newcommand{\interior}{{\rm int}\kern 0.06em}

\def\<{\langle}
\def\>{\rangle}

\newcommand\dom{{\rm dom}}%

\newcommand{\gra }{\operatorname{gra}}
\newcommand{\ran }{\operatorname{ran}}
\newcommand{\zer}{\operatorname{zer}}

\newcommand{\sgn}{\ensuremath{\operatorname{Sign}}}

\usepackage{ulem}

\usepackage{subcaption}

\usepackage{pict2e}

\if
{

\usepackage[pageref]{backref}
\renewcommand*{\backrefalt}[4]{%
\ifcase #1 %
(Not cited)%
\or
(Cited on p.~#2)%
\else
(Cited on pp.~#2)%
\fi
}

}
\fi

\begin{document}
\title{Convergence and Stability Analysis  of a  Generalized Proximal Point  Algorithm and Its Inexact Version}
\author{Ba Khiet Le \thanks{Analytical and Algebraic Methods in Optimization Research Group, Faculty of Mathematics and Statistics, Ton Duc Thang University, Ho Chi Minh City, Vietnam.
 E-mail: \texttt{lebakhiet@tdtu.edu.vn}. Research of this author was partly  supported by National Foundation for Science and
Technology Development (NAFOSTED) of Vietnam under Grant Number
101.01-2025.61}  \qquad Boris S. Mordukhovich\thanks{Department of Mathematics and Center for Artificial Intelligence and Data Science, Wayne State University, Detroit, MI 48202, USA.
 E-mail: \texttt{aa1086@wayne.edu}. Research of this author was partly supported by the US National Science Foundation under grant DMS-2204519 and by the Australian Research Council under Discovery Project DP-190100555} \qquad Michel  A. Th\' era \thanks{l XLIM UMR-CNRS 7252 and Universit\'{e} de Limoges,   123 Avenue Albert Thomas,
87060 Limoges CEDEX, France.  E-mail: \texttt{michel.thera@unilim.fr} ORCID 0000-0001-9022-6406}}
\maketitle

\begin{abstract}
In this paper, we introduce the notion of  adaptive strong monotonicity  and establish a linear convergence of the recently proposed  Generalized Proximal Point Algorithm (GPPA)  for computing approximate solutions to nonmonotone inclusion problems in Hilbert spaces. We also investigate the  Inexact Generalized Proximal Point Algorithm (IGPPA)  in the presence of  nondiminishing errors.

\end{abstract}

{\bf Keywords.} Generalized  Proximal Point Algorithm, Inexact Generalized  Proximal Point Algorithm, nondiminishing errors, approximate solutions, R-continuity

{\bf AMS Subject Classification.}  65K05, 65K10, 49J52, 49J53

\section{Introduction and Motivation}

Many optimization problems (see, e.g., \cite{av,abs,BC,br,L1,LT,Mordukhovich,Mordukhovich24,Nesterov3,Rockafellar,roc-wets}) can be formulated as the  inclusion problem
\begin{equation}\label{main}
0 \in F(x),
\end{equation}
where $F:\mathcal{H}\rightrightarrows\mathcal{H}$  is a set-valued operator defined on a Hilbert space $\mathcal H$. When $F$ is maximally monotone, problem \eqref{main} can be solved numerically by the classical {\it Proximal Point Algorithm} \cite{Rockafellar}:
\begin{equation}\label{ppa}
(\textbf{PPA}):\qquad
x_{k+1}=J_{\gamma F}(x_k), \quad
k\ge0, x_0\in\mathcal H, \gamma>0,
\end{equation}
where $J_{\gamma F}:=(Id+\gamma F)^{-1}$ denotes the resolvent of $\gamma F$, and $Id$ is the identity operator. The iteration \eqref{ppa} is equivalent to
\begin{equation}\label{eqppa}
\frac{x_{k+1}-x_k}{\gamma}\in -F(x_{k+1}).
\end{equation}
It is well known (see, e.g., \cite{Rockafellar}) that the sequence $(x_k)$ generated by (\textbf{PPA}) satisfies
\begin{equation}\label{pp0}
\Vert x_{k+1}-x_k\Vert \;\text{ tends to } \; 0\;\mbox{ as }\;k\to\infty
\end{equation}
and converges weakly to a solution of \eqref{main}. Combining \eqref{eqppa} and \eqref{pp0}, we conclude that, for sufficiently large $k$, the iterate $x_k$ is an {\it approximate solution} of \eqref{main}, a property that is particularly desirable in practical computations. On the other hand, weak convergence to an exact solution may be difficult to achieve in the presence of computational errors.

When $F$ is not monotone, a recent extension of the proximal point framework was proposed in \cite{LDT}. The main idea is to construct an associated mapping $v:\mathcal H\to\mathcal H$ such that the pair $(F,v)$ is monotone (see Definition~\ref{mono}) and then to solve \eqref{main} by using the {\it Generalized Proximal Point Algorithm}
\begin{equation}\label{mainal}
(\textbf{GPPA}):\qquad
x_0\in\mathcal H,\qquad
x_{k+1}\in J_{\gamma F}^{v}(x_k),\quad
k=0,1,2,\ldots,
\end{equation}
where the notation
$$
J_{\gamma F}^{v}:=(\gamma F+v)^{-1}\circ v
$$
stands for the {\it warped resolvent} of $\gamma F$ with kernel $v$ \cite{bbc,bc}. The notion of monotonicity of pairs of operators was originally introduced in \cite{acl} for solving a broad class of quasi-variational inequalities and monotone inclusion problems involving single-valued operators. For set-valued operators, the (\textbf{GPPA}) satisfies \cite{LDT,LMT}
\begin{equation}\label{eqgppa}
\frac{v(x_{k+1})-v(x_k)}{\gamma}\in -F(x_{k+1}),
\end{equation}
together with the convergence
\begin{equation}\label{ppg0}
|v(x_{k+1})-v(x_k)|\to 0\;\mbox{ as }\;k\to\infty.
\end{equation}
Consequently, for sufficiently large $k$, the iterate $x_k$ is again an approximate solution of \eqref{main}. Moreover, if $v$ is bijective, then the sequence $(x_k)$ converges weakly to a solution of \eqref{main}. If, in addition, the pair $(F,v)$  is strongly monotone, then  $(x_k)$  converges linearly to the unique solution. An inertial version of the (\textbf{GPPA}) has recently been developed in \cite{LMT}, where an improved numerical performance was also demonstrated.

The {\it first motivation} of this paper is to establish {\it linear convergence}  of the (\textbf{GPPA}) under significantly weaker assumptions. More precisely, we seek to obtain linear convergence without assuming that $v$ is bijective or that the pair $(F,v)$  is strongly monotone. To this end, we introduce a new concept, called \textit{adaptive strong monotonicity} (Definition~\ref{stmono}), which is strictly weaker than the classical strong monotonicity of operator pairs. Under this new assumption, we establish linear convergence of the residual sequence \eqref{ppg0}; see Theorem~\ref{lin}.

Our {\it second goal} is to study {\it robustness} of the newly proposed {\it Inexact Generalized Proximal Point Algorithm} designed by
\begin{equation}\label{igg}
(\textbf{IGPPA}):\qquad
x_0\in\mathcal H,\qquad
x_{k+1}\in J_{\gamma F}^{v}(x_k)+y_{k+1},
\quad
k=0,1,2,\ldots,
\end{equation}
where the {\it computational errors} satisfy the estimate
\begin{equation}\label{error}
\|y_k\|\le\delta,\qquad k=1,2,\ldots,
\end{equation}
for some prescribed constant $\delta>0$. This setting reflects the practical situation in which the warped resolvent $J_{\gamma F}^{v}(x_k)$ is computed only approximately.

Most existing convergence results for inexact algorithms (see, e.g., \cite{Buscaglia,Reich1,Reich,Rockafellar}) require the error sequence to be {\it summable}, i.e.,
$$
\sum_{k=1}^{\infty}\|y_k\|<\infty,
$$
which necessarily yields $y_k\to0$ and is often unrealistic in practical implementations. Other approaches employ relative error criteria (see, e.g., \cite{Buscaglia,KLMT,KMT}), whose verification at every iteration may also be computationally demanding. To the best of our knowledge, the first  {proximal} work allowing nondiminishing but uniformly bounded errors, under assumption \eqref{error}, is \cite{LMT1}, where the authors investigated the Inexact Proximal Point Algorithm and the Inexact Tseng Algorithm for maximally monotone operators via Tikhonov regularization. Building upon the notion of adaptive strong monotonicity introduced in this paper, we extend this line of research to the {\it nonmonotone} setting by regularizing problem \eqref{main} and establishing the stability of the (\textbf{IGPPA}); see Theorem~\ref{sIGPPA}. {Note that this kind of uniformly bounded errors was also considered in \cite{zas20} to study  some estimations of gradient algorithms.}

The remainder of the paper is organized as follows. Section~\ref{s2} reviews the necessary preliminaries on pair monotonicity and $R$-continuity. Section~\ref{s3} establishes a linear convergence of the (\textbf{GPPA}) for computing approximate solutions under adaptive strong monotonicity. Section~\ref{s4} is devoted to the convergence and stability analysis of the (\textbf{IGPPA}). Finally, concluding remarks are presented in Section~\ref{s5}.

\section{Preliminaries}\label{s2}

Let $\mathcal{H}$ denotes a real Hilbert space endowed with the inner product $\langle \cdot,\cdot\rangle$ and the associated norm $\|\cdot\|$. Given a set-valued mapping $F:\mathcal{H}\rightrightarrows \mathcal{H}$, the {\it domain}, {\it range}, {\it graph}, and set of {\it zeros} of $F$ are defined respectively by
\begin{align*}
\dom F: &= \{x\in\mathcal{H}\;|\;F(x)\neq\varnothing\},\\
\ran F: &= \bigcup_{x\in\mathcal{H}}F(x),\\
\gra F: &= \{(x,y)\in\mathcal{H}\times\mathcal{H}\;|\;y\in F(x)\},\\
\zer F: &= \{x\in\mathcal{H}\;|\;0\in F(x)\}.
\end{align*}
The {\it inverse} operator of $F$ is defined by
\[
F^{-1}(y)=\{x\in\mathcal{H}\;|\;y\in F(x)\}.
\]
We can see  that
\[
\dom F^{-1}=\ran F,
\qquad
\ran F^{-1}=\dom F.
\]
We say that $F$ is  {\it monotone} if 
\[
\langle x^*-y^*,x-y\rangle\geq 0\;\mbox{ for all }\;(x,x^*),(y,y^*)\in\gra F,
\]
or we can write
$$
\langle F(x)-F(y),x-y\rangle\geq 0\;\mbox{ for all }\;x, y \in \mathcal{H}.
$$
It is said that \(F\) is  {\it maximally monotone}
if \(F\) is monotone and  there is no monotone operator
$G :\mathcal{H}\rightrightarrows \mathcal{H} $ such that
\[
\operatorname{gra} F \subsetneq \operatorname{gra} G.
\]
Furthermore, $F$ is called {\it $\alpha$-strongly monotone}  ($\alpha>0$) if
\[
\langle x^*-y^*,x-y\rangle
\geq \alpha\|x-y\|^2 \mbox{ for all }\;(x,x^*),(y,y^*)\in\gra F,
\]
whenever $(x,x^*),(y,y^*)\in\gra F$. The {\it resolvent} of $F$ is defined  by
\[
J_F:=(I+F)^{-1}.
\]

It is classical that if $F$ is maximally monotone, then $J_F$ is single-valued, everywhere defined on $\mathcal{H}$, and firmly nonexpansive (see, e.g., 
\cite{BC,Rockafellar}). Next we talk about the monotonicity of pairs of operators, introduced in \cite{acl} and developed in \cite{LDT,LMT}.

\begin{definition}\label{mono}
Let be given $F, G: \mathcal{H}\rightrightarrows \mathcal{H}$. The {\sc  pair} $(F,G)$ is called {\sc monotone} if 
$$
\langle F(x)-F(y), G(x)-G(y) \rangle\ge 0\;\mbox{ for all }\;x, y \in \mathcal{H}.
$$
\end{definition}

The notion of monotonicity of pairs of operators is remarkably general, as illustrated by the following example.

\begin{example}
Let $F:\mathbb{R}^3\rightrightarrows\mathbb{R}^3$ and $v:\mathbb{R}^3\to\mathbb{R}^3$ be defined by
\[
F(x) = \begin{bmatrix}
\sgn(x_1) \\
\sgn(x_3) \\
\sgn(x_2) 
\end{bmatrix}+\begin{bmatrix}
1 & 2 & 3\\
4 & 5 & 6\\
7 & 8 & 9 
\end{bmatrix}
\begin{bmatrix}
x_1 \\
x_2\\
x_3
\end{bmatrix}
\quad \text{and} \quad
v(x) = \begin{bmatrix}
1 & 0 & 0\\
0 & 0 & 0\\
0 & 0 & 0 
\end{bmatrix}
\begin{bmatrix}
x_1 \\
x_2\\
x_3
\end{bmatrix}
\]
with the notations
$$
x=\begin{bmatrix}
x_1 \\
x_2\\
x_3
\end{bmatrix}\;\;{\rm and}\;\;
\sgn(a) = \begin{cases}
1 & \text{if } a > 0, \\
[-1, 1] & \text{if } a = 0, \\
-1 & \text{if } a < 0.
\end{cases}
$$
Then the pair $(F,v)$ is monotone, although $F$ itself is not monotone.
\end{example}

\begin{example}
Suppose that  $F: \mathcal{H} \to \mathcal{H} $ can be decomposed by $F(x)=f(x)+g(x)$ such that
$$
\Vert g(x)-g(y)\Vert\le \Vert f(x)-f(y)\Vert\;\;{\rm for\;all}\;x, y\in \mathcal{H}
$$
and that $f^{-1}$ can be computed easier than $F^{-1}$.
Denoting $v:=f-g$, we have that the pair $(F,v)$ is monotone. 
\end{example}

\begin{definition}\label{stmono-alpha}
Let $F: \mathcal{H}\rightrightarrows \mathcal{H}, \;v: \mathcal{H}\to \mathcal{H}$ be given. The pair $(F,v)$ is called {\sc $\alpha$-strongly monotone} {\rm ($\alpha>0$)} if 
$$
\langle F(x)-F(y), v(x)-v(y) \rangle\ge \alpha \Vert x-y \Vert^2.
$$
\end{definition}

\begin{remark}
The strong monotonicity of the pair $(F,v)$ is a quite restrictive assumption, which usually requires that $v$ is bijective. It motivates us to introduce a new condition, which is weaker while still serving our purpose; namely, to have some useful information about the convergence rate of optimization algorithms. 
\end{remark}

\begin{definition}\label{stmono}
Let the operators $F: \mathcal{H}\rightrightarrows \mathcal{H}$ and $v: \mathcal{H}\to \mathcal{H}$ be given. The pair $(F,v)$ is called {\sc adaptively strongly monotone} if there exists $\alpha>0$ such that
$$
\langle F(x)-F(y), v(x)-v(y) \rangle\ge \alpha \Vert v(x)-v(y)\Vert^2.
$$
\end{definition}

\begin{remark}
Note that if $v$ is Lipschitz continuous and $(F,v)$ is strongly monotone, then $(F,v)$ is adaptively strongly monotone.
\end{remark}


Next we recall the notion of warped resolvents of an operator that is an extension of the classical resolvent corresponding to $v=Id$.

\begin{definition}\label{wa}\cite{bbc,bc,LDT}
Let \( F: \mathcal{H} \rightrightarrows \mathcal{H} \) and \( v: \mathcal{H} \to \mathcal{H} \) with \( \dom \;v = \mathcal{H} \).  
The {\sc warped resolvent} of \( F \) with kernel \( v \) is the set-valued mapping $J_F^v: \mathcal{H} \rightrightarrows \mathcal{H}$ defined by 
\[
J_F^v: = (F + v)^{-1} \circ v,
\]  
provided that \( \ran v \subseteq \ran(F + v) \).
\end{definition}

Finally let us recall the notions of {\it R-continuity}  of a set-valued mapping $\mathcal{A}:\mathcal{H} \rightrightarrows \mathcal{H}$ at a point ${\bar x}$ in its domain \cite{L1,LMTsv,LT}. 

\begin{definition}\label{rdef}
A set-valued mapping \noindent $\mathcal{A}:\mathcal{H} \rightrightarrows \mathcal{H}$ is called {\sc R-continuous} at ${\bar x}\in\dom\,\mathcal{A}$ if there exist a number $\sigma>0$ and a nondecreasing function $\rho: \mathbb{R}^+\to \mathbb{R}^+$ satisfying $\lim_{r\to 0^+}\rho(r)=\rho(0)=0$ such that we have the inclusion
\begin{equation}\label{rcon}
\mathcal{A}(x) \subset \mathcal{A}({\bar x}  )+\rho(\Vert x-{\bar x}   \Vert)\ball\;\mbox{ for all }\;x\in  \ball({\bar x} ,\sigma)
\end{equation}
with the {\sc continuity modulus function} $\rho$ and {\sc radius} $\sigma$. When $\sigma=\infty$, $\mathcal{A}$ is  called {\sc globally R-continuous} at ${\bar x}$. We further say that: 
\begin{itemize}
\item $\mathcal{A}$ is {\sc $R$-Lipschitz continuous} at ${\bar x} $ with modulus {$L>0$}  if $\rho( r)=Lr$.
\item $\mathcal{A}$ is {\sc $R$-H\"older continuous} at ${\bar x}  $  if $\rho( r)=Lr^\theta$ for some $L>0, \theta>0$.
\end{itemize}
\end{definition}\vspace*{0.05in}

\begin{definition}\label{crdef}
A set-valued mapping \noindent $\mathcal{A}:\mathcal{H} \rightrightarrows \mathcal{H}$ is  called {\sc compactly 
R-continuous} at ${\bar x}\in\dom\,\mathcal{A}$ if for every compact set $K\subset {\mathcal{H}}$, there exists a number $\sigma>0$ such that we can find a nondecreasing function $\rho: \mathbb{R}^+\to \mathbb{R}^+$ satisfying $\lim_{r\to 0^+}\rho(r)=\rho(0)=0$ with
\begin{equation*}\label{rconc}
\mathcal{A}(x)\cap K \subset \mathcal{A}({\bar x}  )+\rho(\Vert x-{\bar x}   \Vert)\ball\;\mbox{ for all }\;x\in  \ball({\bar x} ,\sigma).
\end{equation*}
Similarly to Definition~{\rm\ref{rdef}}, we say that:
\begin{itemize}
\item $\mathcal{A}$ is {\sc compactly $R$-Lipschitz continuous} at ${\bar x} $ with modulus $L>0$ if $\rho( r)=Lr$.

\item $\mathcal{A}$ is {\sc compactly $R$-H\"older continuous} at ${\bar x}$ if $\rho( r)=Lr^\theta$ for some 
$L, \theta>0$.
\end{itemize}
\end{definition}\vspace*{0.05in}

Some characterizations and sufficient conditions for the R-continuity and compact R-continuity notions can be found in  the recent papers \cite{L1,LMTsv,LT}. 

\section{Linear Convergence of \textbf{(GPPA)}}\label{s3}

We begin with recalling a convergence result showing that \textbf{(GPPA)} generates approximate solutions of \eqref{main}.

\begin{theorem}\cite{LDT,LMT}
Suppose that \(S:=\zer F\neq\varnothing\), \((F,v)\) is monotone, and
\[
\ran v\subseteq \ran(\gamma F+v)
\]
for some \(\gamma>0\). Let \((x_k)_{k\in\mathbb N}\) be the sequence generated by Algorithm~\textbf{(GPPA)}. Then the following assertions hold:

\begin{itemize}
\item[(a)] \(\|v(x_{k+1})-v(x_k)\|\to0\) as \(k\to\infty\).

\item[(b)] If \(F\) has a strongly--weakly closed graph, i.e.,
\[
u_k\in F(v_k),\quad
u_k\to u,\quad
v_k\rightharpoonup v
\quad\Longrightarrow\quad
u\in F(v),
\]
then every weak cluster point of \((x_k)\) is a solution of \eqref{main}.

\item[(c)] If \(F^{-1}\) is \(R\)-continuous at \(0\), then
\[
d(x_k,S)\to0
\quad\text{as }k\to\infty.
\]
\end{itemize}
\end{theorem}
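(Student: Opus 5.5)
The plan is to work with the inclusion \eqref{eqgppa}, i.e.
\[
u_{k+1}:=\frac{v(x_k)-v(x_{k+1})}{\gamma}\in F(x_{k+1}),
\]
and to pair it against a fixed solution \(\bar x\in S\), for which \(0\in F(\bar x)\). Monotonicity of the pair \((F,v)\), applied to \(x_{k+1}\) and \(\bar x\), yields
\[
\langle v(x_k)-v(x_{k+1}),\,v(x_{k+1})-v(\bar x)\rangle\ge 0 .
\]
Note that the iteration is well defined because \(\ran v\subseteq\ran(\gamma F+v)\), so \(J^v_{\gamma F}(x_k)\neq\varnothing\).

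\textbf{Part (a).} Set \(a_k:=v(x_k)-v(\bar x)\). The inequality above becomes \(\langle a_k-a_{k+1},a_{k+1}\rangle\ge0\). Using \(2\langle a-b,b\rangle=\|a\|^2-\|b\|^2-\|a-b\|^2\), this gives
\[
\|a_{k+1}\|^2+\|v(x_k)-v(x_{k+1})\|^2\le\|a_k\|^2 .
\]
Hence \((\|a_k\|)\) is nonincreasing, which makes the sequence \((v(x_k))\) bounded. Telescoping then gives
\[
\sum_k\|v(x_{k+1})-v(x_k)\|^2\le\|v(x_0)-v(\bar x)\|^2<\infty,
\]
and (a) follows.

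\textbf{Part (b).} Let \(x_{k_j}\rightharpoonup x^*\). I would shift the indices and use \(u_{k_j}\in F(x_{k_j})\) with
\[
u_{k_j}=\frac{v(x_{k_j-1})-v(x_{k_j})}{\gamma}\to0
\]
by (a). The strong–weak closedness of \(\gra F\) then gives \(0\in F(x^*)\).

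\textbf{Part (c).} Since \(u_{k+1}\in F(x_{k+1})\), we have \(x_{k+1}\in F^{-1}(u_{k+1})\). By (a), \(\|u_{k+1}\|\to0\), so for large \(k\) we have \(u_{k+1}\in\ball(0,\sigma)\). R-continuity of \(F^{-1}\) at \(0\) then gives
\[
F^{-1}(u_{k+1})\subset F^{-1}(0)+\rho(\|u_{k+1}\|)\ball=S+\rho(\|u_{k+1}\|)\ball ,
\]
and therefore \(d(x_{k+1},S)\le\rho(\|u_{k+1}\|)\to0\), because \(\rho(r)\to0\) as \(r\to0^+\).

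The main point needing care is the Fejér-type inequality in (a). It must be measured in the \(v\)-image rather than in \(x\), since \(v\) need not be injective. Everything else follows from it, together with the index shift in (b) and the choice of \(\bar x\in S\), which the hypothesis \(S\neq\varnothing\) makes possible.
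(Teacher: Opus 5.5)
Your proof is correct. The paper does not reprove this result; it is quoted from the cited works. Your argument is nonetheless the same template the paper uses for Theorem~\ref{lin}: pair the inclusion \eqref{chagp} with a point of $S$ via monotonicity of $(F,v)$, and apply the polarization identity in the $v$-image. Your part (a) is the case $\epsilon=0$ of that computation, with telescoping in place of the contraction. Then write $x_{k+1}\in F^{-1}\bigl(-(v(x_{k+1})-v(x_k))/\gamma\bigr)$ and invoke R-continuity, exactly as in part (b) of Theorem~\ref{lin}.
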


We now obtain a new result establishing a  {\it linear convergence rate}  of \textbf{(GPPA)} under the assumption of adaptive strong monotonicity.

\begin{theorem}\label{lin}
Suppose that \(S:=\zer F\neq\varnothing\), \((F,v)\) is
\(\epsilon\)-adaptively strongly monotone, and
\[
\ran v\subseteq\ran(\gamma F+v)
\]
for some \(\epsilon>0\) and \(\gamma>0\). Let
\((x_k)_{k\in\mathbb N}\) be a sequence generated by Algorithm~\textbf{(GPPA)}. Then the following assertions hold:

\begin{itemize}
\item[(a)] For every \(x^*\in S\), we have the estimates
\[
\|v(x_{k+1})-v(x^*)\|^2
\le
\frac{1}{1+2\gamma\epsilon}
\|v(x_k)-v(x^*)\|^2
\]
and
\[
\|v(x_{k+1})-v(x_k)\|^2
\le
\|v(x_k)-v(x^*)\|^2
-(1+2\gamma\epsilon)
\|v(x_{k+1})-v(x^*)\|^2.
\]

\item[(b)] If \(F^{-1}\) is \(R\)-{Lipschitz} continuous at \(0\), then
\[
d(x_k,S)\to0
\]
with a linear convergence rate as \(k\to\infty\).
\end{itemize}
\end{theorem}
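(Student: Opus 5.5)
We will obtain both estimates in (a) from a single three-point identity; part (b) then follows by combining (a) with $R$-Lipschitz continuity of $F^{-1}$.

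For (a), fix $x^*\in S$, so that $0\in F(x^*)$. By \eqref{eqgppa}, the iterate satisfies $u_{k+1}:=\frac{v(x_k)-v(x_{k+1})}{\gamma}\in F(x_{k+1})$. We apply $\epsilon$-adaptive strong monotonicity of $(F,v)$ to the pairs $(x_{k+1},u_{k+1})$ and $(x^*,0)$. This gives
\[
\langle v(x_k)-v(x_{k+1}),\,v(x_{k+1})-v(x^*)\rangle\ge \gamma\epsilon\|v(x_{k+1})-v(x^*)\|^2 .
\]
We write $a:=v(x_k)-v(x^*)$ and $b:=v(x_{k+1})-v(x^*)$, so the left-hand side is $\langle a-b,b\rangle$. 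The polarization identity
\[
2\langle a-b,b\rangle=\|a\|^2-\|b\|^2-\|a-b\|^2
\]
then yields
\[
\|a-b\|^2\le\|a\|^2-(1+2\gamma\epsilon)\|b\|^2 .
\]
This is exactly the second estimate. Dropping the nonnegative term $\|a-b\|^2$ gives the first estimate.

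For (b), set $q:=(1+2\gamma\epsilon)^{-1/2}<1$. Iterating the first estimate gives $\|v(x_k)-v(x^*)\|\le q^k\|v(x_0)-v(x^*)\|$. The triangle inequality then gives
\[
\|v(x_{k+1})-v(x_k)\|\le 2q^k\|v(x_0)-v(x^*)\|,
\]
so the residual $\|u_{k+1}\|\le \frac{2}{\gamma}q^k\|v(x_0)-v(x^*)\|$ decays linearly. Since $u_{k+1}\in F(x_{k+1})$, we have $x_{k+1}\in F^{-1}(u_{k+1})$. By $R$-Lipschitz continuity of $F^{-1}$ at $0$ with modulus $L$ and radius $\sigma$, and using $F^{-1}(0)=S$,
\[
d(x_{k+1},S)\le L\|u_{k+1}\|\quad\text{whenever } \|u_{k+1}\|<\sigma .
\]
Since $u_{k+1}\to0$, this holds for all large $k$, and therefore $d(x_{k+1},S)\le \frac{2L}{\gamma}\|v(x_0)-v(x^*)\|\,q^k$, which is linear convergence.

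The only delicate point is the final step. The $R$-continuity inclusion is local, so the linear bound applies only once $u_{k+1}$ has entered $\ball(0,\sigma)$; this is enough for a linear rate as $k\to\infty$. Also, we use only the specific element $u_{k+1}$ of $F(x_{k+1})$ produced by the algorithm; no selection issue arises, because adaptive strong monotonicity holds for every choice of elements of $F(x_{k+1})$ and $F(x^*)$.
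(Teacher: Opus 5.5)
Your proof is correct and follows the paper's argument essentially step for step. For (a) you use the same adaptive-strong-monotonicity inequality between $(x_{k+1},\tfrac{v(x_k)-v(x_{k+1})}{\gamma})$ and $(x^*,0)$, combined with the polarization identity. For (b) you use $x_{k+1}\in F^{-1}(u_{k+1})$ together with $R$-Lipschitz continuity at $0$. The only minor differences are that you bound the residual by the triangle inequality rather than reading it off the second estimate in (a), and that you explicitly handle the radius $\sigma$, which the paper's proof passes over silently.
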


\begin{proof}
(a) By the definition of Algorithm~\textbf{(GPPA)}, we have the inclusion
\begin{equation}\label{chagp}
v(x_{k+1})-v(x_k)\in-\gamma F(x_{k+1})
\end{equation}
together with the one
\[
0\in-\gamma F(x^*)
\]
by taking into account that \(x^*\in S\).

Using the \(\epsilon\)-adaptive strong monotonicity of \((F,v)\) tells us that
\[
\langle
v(x_{k+1})-v(x_k),
v(x_{k+1})-v(x^*)
\rangle
\le
-\gamma\epsilon
\|v(x_{k+1})-v(x^*)\|^2.
\]
Equivalently, it says that
\begin{align*}
2(1+\gamma\epsilon)
\|v(x_{k+1})-v(x^*)\|^2
\le
2\langle
v(x_k)-v(x^*),
v(x_{k+1})-v(x^*)
\rangle\\
\le
\|v(x_{k+1})-v(x^*)\|^2
+\|v(x_k)-v(x^*)\|^2
-\|v(x_{k+1})-v(x_k)\|^2,
\end{align*}
where the second inequality follows from the polarization identity. Therefore,
\[
(1+2\gamma\epsilon)
\|v(x_{k+1})-v(x^*)\|^2
\le
\|v(x_k)-v(x^*)\|^2
-\|v(x_{k+1})-v(x_k)\|^2,
\]
which obviously verifiers the desired estimates
\[
\|v(x_{k+1})-v(x^*)\|^2
\le
\frac{1}{1+2\gamma\epsilon}
\|v(x_k)-v(x^*)\|^2
\]
and
\[
\|v(x_{k+1})-v(x_k)\|^2
\le
\|v(x_k)-v(x^*)\|^2
-(1+2\gamma\epsilon)
\|v(x_{k+1})-v(x^*)\|^2.
\]
Consequently, both
\(\|v(x_{k+1})-v(x^*)\|\) and
\(\|v(x_{k+1})-v(x_k)\|\)
converge to zero linearly.\\

(b) Assume that \(F^{-1}\) is \(R\)-Lipschitz continuous at \(0\) with modulus \(L>0\). By \eqref{chagp},
\[
-\frac1\gamma\bigl(v(x_{k+1})-v(x_k)\bigr)\in F(x_{k+1}),
\]
which is equivalently written as
\[
x_{k+1}
\in
F^{-1}\!\left(
-\frac{v(x_{k+1})-v(x_k)}{\gamma}
\right).
\]
By the \(R\)-Lipschitz continuity of \(F^{-1}\) at \(0\), we get
\[
x_{k+1}
\in
F^{-1}(0)
+
\frac{L}{\gamma}
\|v(x_{k+1})-v(x_k)\|
\,\mathbb B
=
S+
\frac{L}{\gamma}
\|v(x_{k+1})-v(x_k)\|
\,\mathbb B,
\]
which readily implies that
\[
d(x_{k+1},S)
\le
\frac{L}{\gamma}
\|v(x_{k+1})-v(x_k)\|.
\]
Since
\(\|v(x_{k+1})-v(x_k)\|\)
converges to zero linearly, so does
\(d(x_{k+1},S)\).
\end{proof}

\begin{remark}
If $(x_k)$ is bounded and $\mathcal{H}=\R^n$, then we can replace the R-Lipschitz continuity of $F^{-1}$ at zero by the compactly R-Lipschitz continuity of $F^{-1}$ at zero; see also \cite{LMTsv,LMT1}.
\end{remark}

\section{Stability and Convergence Analysis of \textbf{(IGPPA)}}\label{s4}
In this section, we study the convergence of \textbf{(IGPPA)} under the {\it nondiminishing errors}. The idea is to regularize the original problem (\ref{main}) by the problem
\begin{equation}\label{remain}
0 \in F(x)+\epsilon v(x),
\end{equation}
where the pair $(F,v)$ is monotone and $\epsilon>0$. 

\begin{lemma}\label{esti}
Suppose that $(F,v)$ is monotone and that $x_1\in \zer F, x_2\in \zer (F+\epsilon v)$ for some $\epsilon>0$. Then we have the estimate
$$
\Vert v(x_2) \Vert\le \Vert v(x_1)\Vert.
$$
\end{lemma}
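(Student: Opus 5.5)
The plan is to use the monotonicity of the pair $(F,v)$ from Definition~\ref{mono} directly, with the two points $x_2$ and $x_1$ and suitable elements of $F(x_2)$ and $F(x_1)$.

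First, translate the hypotheses into membership statements. Since $x_1\in\zer F$, we have $0\in F(x_1)$. Since $x_2\in\zer(F+\epsilon v)$ and $v$ is single-valued, there is an element of $F(x_2)$ equal to $-\epsilon v(x_2)$, that is,
\[
-\epsilon v(x_2)\in F(x_2).
\]

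Next, apply the monotonicity of $(F,v)$ to the pairs $(x_2,-\epsilon v(x_2))$ and $(x_1,0)$ in $\gra F$. This gives
\[
\langle -\epsilon v(x_2)-0,\; v(x_2)-v(x_1)\rangle\ge 0 .
\]
Dividing by $\epsilon>0$, this becomes $\|v(x_2)\|^2\le \langle v(x_2),v(x_1)\rangle$. The Cauchy--Schwarz inequality then yields $\|v(x_2)\|^2\le\|v(x_2)\|\,\|v(x_1)\|$.

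Finally, conclude by cases. If $v(x_2)=0$ the claim is trivial. Otherwise, divide by $\|v(x_2)\|$ to obtain $\|v(x_2)\|\le\|v(x_1)\|$.

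There is no real obstacle. The only point needing care is that Definition~\ref{mono} is written with set-valued notation. It has to be read as holding for all selections, i.e.\ for all $(x,x^*),(y,y^*)\in\gra F$, as in the monotonicity definition earlier in the paper. This is what allows us to use the specific elements $-\epsilon v(x_2)\in F(x_2)$ and $0\in F(x_1)$.
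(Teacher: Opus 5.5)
Your proposal is correct and follows the paper's proof essentially step for step: you plug $0\in F(x_1)$ and $-\epsilon v(x_2)\in F(x_2)$ into the pair monotonicity inequality, then apply Cauchy--Schwarz and divide by $\|v(x_2)\|$. Your separate handling of the case $v(x_2)=0$, and your note that Definition~\ref{mono} must be read for all selections of $F$, are small points the paper leaves implicit.
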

\begin{proof} 
Observe the inclusions 
$$
0\in F(x_1),\;\;{\rm } -\epsilon v(x_2)\in F(x_2).
$$
Since $(F,v)$ is monotone, we deduce that
$$
\langle \epsilon v(x_2), v(x_1)-v(x_2)\rangle\ge 0,
$$
which implies in turn that
$$
\Vert v(x_2) \Vert^2\le \langle v(x_1),v(x_2)\rangle\le \Vert v(x_1)\Vert \Vert v(x_2)\Vert.
$$
Therefore, it follows that
$$
\Vert v(x_2) \Vert\le \Vert v(x_1)\Vert
$$
as claimed in the lemma.
\end{proof}


\begin{theorem}\label{sIGPPA}
Suppose that $(F,v)$ is monotone, $v$ is $L$-Lipschitz continuous, and
\[
S:=\zer F\neq\varnothing,
\qquad
\zer(F+\epsilon v)\neq\varnothing,
\]
for some  number $\epsilon>0$ sufficiently small. Assume further that $F^{-1}$ is $R$-continuous at $0$ with modulus function $\rho$. Let $(x_k)$ be the sequence generated by the \textbf{(IGPPA)} applied to the regularized problem \eqref{remain}; namely,
\[
x_0\in\mathcal H,\qquad
x_{k+1}\in J_{\gamma\widetilde F}^{\,v}(x_k)+y_{k+1},
\quad k=0,1,2,\ldots,
\]
with the notation
\[
\widetilde F:=F+\epsilon v
\]
and the error sequence satisfying
\[
\|y_{k+1}\|\le\delta,
\]
for some $\delta>0$.\\[0.5ex]
Then by choosing $\delta=\epsilon^2$, we get the estimate
\[
\limsup_{k\to\infty}d(x_k,S)
\le
\epsilon^2+
\rho\!\left(
\frac{L\epsilon^2+4L\epsilon/\gamma}{\gamma}
+\epsilon\left(\frac{2L\epsilon}{\gamma}+a\right)
\right),
\]
where the number $a$ is defined as
\[
a:=\inf_{\tilde x\in S}\|v(\tilde x)\|.
\]
Consequently, we arrive at the condition
\[
\lim_{\epsilon\to0}
\limsup_{k\to\infty}d(x_k,S)=0.
\]
\end{theorem}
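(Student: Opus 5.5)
The approach is to compare the inexact iterates with an exact (GPPA) step on the regularized operator $\widetilde F=F+\epsilon v$, and then pass to $S$ through the $R$-continuity of $F^{-1}$ at $0$.

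\textbf{Step 1 (adaptive strong monotonicity).} The pair $(\widetilde F,v)$ is $\epsilon$-adaptively strongly monotone in the sense of Definition~\ref{stmono}. Indeed, for $u\in F(x)$ and $w\in F(y)$,
\[
\langle u+\epsilon v(x)-w-\epsilon v(y),\,v(x)-v(y)\rangle\ge \epsilon\|v(x)-v(y)\|^2 .
\]
Fix $\bar x\in\zer(F+\epsilon v)$ and set $z_{k+1}\in J^v_{\gamma\widetilde F}(x_k)$, so that $x_{k+1}=z_{k+1}+y_{k+1}$. The one-step argument in the proof of Theorem~\ref{lin}(a) uses only the inclusion $v(z_{k+1})-v(x_k)\in-\gamma\widetilde F(z_{k+1})$ and $0\in\widetilde F(\bar x)$. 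It therefore gives
\[
\|v(z_{k+1})-v(\bar x)\|\le q\|v(x_k)-v(\bar x)\|,\qquad q:=(1+2\gamma\epsilon)^{-1/2}<1 .
\]

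\textbf{Step 2 (bounding $e_k$).} By the $L$-Lipschitz continuity of $v$, $\|v(x_{k+1})-v(z_{k+1})\|\le L\delta$. Writing $e_k:=\|v(x_k)-v(\bar x)\|$, we get the recursion
\[
e_{k+1}\le q e_k+L\delta ,
\]
hence $\limsup_k e_k\le L\delta/(1-q)$. Since $1-q\ge c\gamma\epsilon$ for $\epsilon$ small (for instance $1-q\ge \gamma\epsilon/2$), and $\delta=\epsilon^2$, this limsup is of order $L\epsilon/\gamma$. This explains the terms of size $L\epsilon/\gamma$ in the statement; I would tune the constant to match the $2L\epsilon/\gamma$ and $4L\epsilon/\gamma$ appearing there. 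It follows that $\limsup\|v(x_{k+1})-v(x_k)\|\le 2\limsup e_k$. Then
\[
\|v(z_{k+1})-v(x_k)\|\le \|v(x_{k+1})-v(x_k)\|+L\epsilon^2 ,
\]
which is asymptotically at most $L\epsilon^2+4L\epsilon/\gamma$ up to constants.

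\textbf{Step 3 (residual of $F$ at $z_{k+1}$).} From the inclusion,
\[
w_{k+1}:=-\tfrac1\gamma\bigl(v(z_{k+1})-v(x_k)\bigr)-\epsilon v(z_{k+1})\in F(z_{k+1}).
\]
To bound $\|v(z_{k+1})\|$, write $\|v(z_{k+1})\|\le\|v(\bar x)\|+\|v(z_{k+1})-v(\bar x)\|$. Lemma~\ref{esti} gives $\|v(\bar x)\|\le\|v(\tilde x)\|$ for every $\tilde x\in S$, so $\|v(\bar x)\|\le a$ after taking the infimum. Moreover, $\|v(z_{k+1})-v(\bar x)\|\le q e_k$, which is asymptotically at most $2L\epsilon/\gamma$. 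Therefore
\[
\limsup_k\|w_{k+1}\|\le \frac{L\epsilon^2+4L\epsilon/\gamma}{\gamma}+\epsilon\Bigl(\frac{2L\epsilon}{\gamma}+a\Bigr).
\]

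\textbf{Step 4 (passing to $S$).} For $\epsilon$ small this residual bound lies within the radius $\sigma$ of $R$-continuity. We have $z_{k+1}\in F^{-1}(w_{k+1})\subset S+\rho(\|w_{k+1}\|)\mathbb B$, so $d(x_{k+1},S)\le \|y_{k+1}\|+d(z_{k+1},S)\le\epsilon^2+\rho(\|w_{k+1}\|)$. Since $\rho$ is nondecreasing, taking limsup yields the claimed estimate; here I would note that $\limsup_k\rho(\|w_{k+1}\|)\le\rho(r)$ for every $r$ strictly above the limsup bound, and that this matches the stated form given the chosen constants. The final statement follows because the argument of $\rho$ tends to $0$ as $\epsilon\to0$ and $\rho(r)\to0$ as $r\to0^+$.

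The main obstacle is Step 2: getting explicit constants like $2L\epsilon/\gamma$ requires a clean lower bound on $1-q$, for instance $1-(1+t)^{-1/2}\ge t/(2(1+t))$ with $t=2\gamma\epsilon$, which is about $\gamma\epsilon/2$ for small $\epsilon$. It also requires care that the limsup bound on $e_k$ is applied uniformly to both the difference term and the $\|v(z_{k+1})\|$ term.
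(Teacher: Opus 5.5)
Your proposal is correct and follows essentially the same route as the paper: $\epsilon$-adaptive strong monotonicity of $(\widetilde F,v)$, the one-step contraction from Theorem~\ref{lin}(a) at the exact step $z_{k+1}=x_{k+1}-y_{k+1}$ (the paper's $u_{k+1}$), a geometric-series bound on the accumulated $L\delta$ errors with $\delta=\epsilon^2$, Lemma~\ref{esti} to get $\|v(\bar x)\|\le a$, and $R$-continuity of $F^{-1}$ at $0$ together with $\|y_{k+1}\|\le\epsilon^2$. Running the recursion on $e_k=\|v(x_k)-v(\bar x)\|$ with limsups, instead of the paper's explicit $C(\epsilon,k)$, is only a cosmetic difference. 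Your remarks on staying inside the radius $\sigma$, and on passing the limsup inside the merely nondecreasing $\rho$, are more careful than the paper, which takes the limit inside $\rho$ without comment; the latter works because $1-q\ge\gamma\epsilon/2$ holds with strict slack when $\gamma\epsilon\le 1/2$.
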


\begin{proof}
It follows from the definition
\[
\widetilde F=F+\epsilon v
\]
that the pair $(\widetilde F,v)$ is $\epsilon$-adaptively strongly monotone. Indeed, we have
\begin{align*}
\langle
\widetilde F(x)-\widetilde F(y),
v(x)-v(y)
\rangle
&=
\langle
F(x)-F(y),
v(x)-v(y)
\rangle
+\epsilon\|v(x)-v(y)\|^2\\
&\ge
\epsilon\|v(x)-v(y)\|^2,
\end{align*}
for all $x,y\in\mathcal H$. Define further
\[
u_k:=x_k-y_k,\qquad k\ge0
\]
and observe the equivalence
\[
u_{k+1}\in J_{\gamma\widetilde F}^{\,v}(x_k)
\iff
v(u_{k+1})-v(x_k)\in-\gamma\widetilde F(u_{k+1}).
\]

Picking $x^*\in\zer\widetilde F$ and proceeding as in the proof of Theorem~\ref{lin}(a), we obtain
\begin{align*}
\|v(u_{k+1})-v(x^*)\|
&\le
\kappa\|v(x_k)-v(x^*)\|\\
&\le
\kappa\bigl(
\|v(x_k)-v(u_k)\|
+\|v(u_k)-v(x^*)\|
\bigr)\\
&\le
\kappa\bigl(
L\delta+\|v(u_k)-v(x^*)\|
\bigr)
\end{align*}
with the constant $\kappa>0$ defined by
\[
\kappa:=\frac1{\sqrt{1+2\gamma\epsilon}}.
\]
Iterating the latter inequality yields
\begin{align*}
\|v(u_{k+1})-v(x^*)\|
&\le
L\delta\sum_{i=1}^{k+1}\kappa^i
+\kappa^{k+1}\|v(u_0)-v(x^*)\|\\
&\le
\frac{L\delta\kappa}{1-\kappa}
+\kappa^{k+1}\|v(u_0)-v(x^*)\|\\
&\le
\frac{2L\delta}{\gamma\epsilon}
+\kappa^{k+1}\|v(u_0)-v(x^*)\|.
\end{align*}

Choose now $\delta=\epsilon^2$ and define
\[
C(\epsilon,k)
:=
\frac{2L\epsilon}{\gamma}
+\kappa^{k+1}\|v(u_0)-v(x^*)\|,
\]
which brings us to the estimate
\[
\|v(u_{k+1})-v(x^*)\|
\le
C(\epsilon,k),
\]
and
\[
\lim_{k\to\infty}C(\epsilon,k)
=
\frac{2L\epsilon}{\gamma},
\]
It follows from Lemma~\ref{esti} that
\[
\|v(x^*)\|
\le
a,
\qquad
a:=\inf_{\tilde x\in S}\|v(\tilde x)\|,
\]
and therefore we get
\[
\|v(u_{k+1})\|
\le
C(\epsilon,k)+a.
\]
We have
\baqn
\Vert v(x_{k+1} )-v(x^*)\Vert &\le& \Vert v(x_{k+1} )-v(u_{k+1})\Vert +\Vert v(u_{k+1} )-v(x^*)\Vert \\
&\le& L\delta+C(\epsilon,k)=L\epsilon^2+C(\epsilon,k).
\eaqn
Moreover, it follows from the above that
\baqn
\Vert v(u_{k+1} )-v(x_k)\Vert &\le& \Vert v(u_{k+1} )-v(x^*)\Vert + \Vert v(x_{k} )-v(x^*)\Vert \\
&\le& L\epsilon^2+2C(\epsilon,k-1).
\eaqn
Note further that
$$
v(u_{k+1})-v(x_k)\in -\gamma \tilde{F}(u_{k+1})=-\gamma {F}(u_{k+1})-\gamma \epsilon v(u_{k+1})
$$
resulting in the inclusions
\baqn
u_{k+1}&\in& F^{-1}(\frac{ v(u_{k+1})-v(x_k)}{-\gamma}-\epsilon v(u_{k+1}))\\
&\subseteq& F^{-1}(0)+\rho\Big(\frac{\Vert v(u_{k+1})-v(x_k)\Vert}{\gamma}+\epsilon \Vert v(u_{k+1}\Vert\Big)\ball \\
&\subseteq& S +\rho\Big(\frac{ L\epsilon^2+2C(\epsilon,k-1)}{\gamma}+\epsilon (C(\epsilon,k)+a)\Big)\ball.
\eaqn
The latter readily guarantees the distance estimates
$$
d(u_{k+1},S)\le \rho\Big(\frac{ L\epsilon^2+2C(\epsilon,k-1)}{\gamma}+\epsilon (C(\epsilon,k)+a)\Big)\,
$$
and
\baqn
d(x_{k+1},S)\le d(u_{k+1},S)+\Vert x_{k+1}-u_{k+1}\Vert\le   \rho\Big(\frac{ L\epsilon^2+2C(\epsilon,k-1)}{\gamma}+\epsilon (C(\epsilon,k)+a)\Big)+\epsilon^2.
\eaqn
Consequently, we arrive at 
\begin{equation*}
\limsup_{k\to\infty} d(x_k,S)
\leq
\epsilon^2+\rho\Big(\frac{ L\epsilon^2+4L \epsilon\gamma^{-1}}{\gamma}+\epsilon (2L \epsilon\gamma^{-1}+a)\Big)\to 0 \;\; {\rm as}\; \;\epsilon\to 0
\end{equation*}
and thus complete the proof of the theorem.
\end{proof}


\begin{remark}
Without loss of generality, it is possible to assume that $L=1$. Indeed, if $v$ is $L$-Lipschitz continuous and the pair $(F,v)$ is monotone, then the pair $(F,\widetilde v)$ with
\[
\widetilde v:=\frac{1}{L}v,
\]
is clearly monotone as well.
\end{remark}

\section{Concluding Remarks and Future Research}\label{s5}

In this paper, we established the linear convergence of \textbf{(GPPA)} for computing approximate solutions of nonmonotone inclusion problems under the newly introduced condition of adaptive strong monotonicity for pairs of operators. The obtained convergence result provides a theoretical foundation for studying the inexact generalized proximal point algorithm \textbf{(IGPPA)}, which allows the presence of nonvanishing computational errors.

Future research will focus on developing further theoretical properties and practical applications of pair monotonicity as well as our technique dealing with inexact algorithms. In particular, it would be of interest to investigate broader classes of optimization and variational problems for which this framework yields efficient and robust numerical algorithms.

\end{document}